\newtheorem{theorem}{Theorem}[section]
\newtheorem{corollary}[theorem]{Corollary}
\newtheorem{lemma}[theorem]{Lemma}
\newtheorem{proposition}[theorem]{Proposition}
\newtheorem{problem}[theorem]{Problem}
\theoremstyle{remark}
\newtheorem{remark}[theorem]{Remark}
\renewcommand{\b}[1]{\mathbf{#1}}
\begin{document}

\title{Partial words with a unique position starting a square}
\author[John Machacek]{John Machacek}
\address{
Department of Mathematics and Statistics\\ York  University\\ Toronto, Ontario M3J 1P3\\ CANADA
}
\email{machacek@yorku.ca}

\subjclass[2010]{68R15}

\begin{abstract}
We consider partial words with a unique position starting a power.
We show that over a $k$ letter alphabet, a partial word with a unique position starting a square can contain at most $k$ squares.
This is in contrast to full words which can contain at most one power if a unique position starts a power.
For certain higher powers we exhibit binary partial words containing three powers all of which start at the same position.
\end{abstract}
\maketitle

\section{Introduction}
We consider how many powers can be contained in a partial if only one position is allowed to start a power.
For full words, it follows from the periodicity theorem of Fine and Wilf~\cite[Theorem 1]{FineWilf} that if only one position starts a power in a full word, then that full word contains only one power.
We give a proof this fact and record it as Corollary~\ref{cor:full}.
This shows we are considering a novel phenomenon of partial words.

Enumerating powers in words and partial words is a fundamental problem in combinatorics on words.
One approach is to count the number of distinct powers.
Extremal words or partial words exhibiting greatest number of distinct powers are of theoretical interest, and they are also of practical importance due to the role repetition plays in string algorithms~\cite{CIR2009,Smyth2013}.
Another approach to enumerating powers is to look at which positions start a power, as it was done for squares in~\cite{positions, ME}.
Here we consider extremal words for the constrained problem of maximizing the number of distinct powers subject to restricting positions starting a power.

The number of distinct squares in a word of length $n$ over any alphabet was shown to be at most $2n$ by Fraenkel and Simpson~\cite{FS1998}.
Another proof of this fact was given by Ilie~\cite{I2005} who also has given an improved bound of $2n- \Theta(\log n)$~\cite{I2007}.
The analogous problem for partial words has also been considered~\cite{BSM2009,BSM,HHK2010, ME}.
A key insight for counting distinct squares in full words is that at most two squares can have their last occurrence starting at a given position regardless of alphabet size.
This is no longer true for partial words.
Halava, Harju, and K\"arki have shown that in a partial word with one hole a maximum of $2k$ squares can have there last occurrence at a given position~\cite[Theorem 2.1]{HHK2010}, and they have given a partial word achieving the bound~\cite[Theorem 2.5]{HHK2010}.

For our problem we also find dependence on the size of the alphabet.
We prove in Theorem~\ref{thm:sq} that a partial word over an alphabet of size $k$ with a unique position starting a square can contain at most $k$ squares.
Furthermore, we show the existence of a partial word meeting this bound.
In Proposition~\ref{prop:2} and Proposition~\ref{prop:3} we construct binary partial words with a unique position starting higher powers.

\section{Preliminaries}
We now give the necessary definitions for our work.
A \emph{full word} is a finite sequence of \emph{letters} taken from a finite set $\Sigma$ called an \emph{alphabet}.
A \emph{partial word} is a finite sequence taken from $\Sigma_{\diamond} := \Sigma \sqcup \{\diamond\}$.
Here $\diamond$ is called a \emph{hole} and represents a ``wild card'' symbol.
Full words are special cases of partial words which contain no holes.
We represent partial words in bold.

If $\b{w}$ is a partial word we denote its length by $|\b{w}|$.
We index positions in our partial words starting from $1$.
We let $\b{w}[i]$ denote the letter or hole in $i$th position of $\b{w}$ and let $\b{w}[i..j]$ denote the \emph{factor} which is the partial word that occupies positions $i$ to $j$ of $\b{w}$.
We may write $\b{w}[..j]$ for the \emph{prefix} $\b{w}[1..j]$ and $\b{w}[i..]$ for the \emph{suffix} $\b{w}[i..|\b{w}|]$.
We define the two sets
\[D(\b{w}) := \{i : 1 \leq i \leq |\b{w}|, \b{w}[i] \neq \diamond \}\]
and
\[H(\b{w}) := \{ i : 1 \leq i \leq |\b{w}|, \b{w}[i] = \diamond \}\]
that keep track which positions of $\b{w}$ are holes.

We say that a partial word $\b{w}$ is \emph{$p$-periodic} if $\b{w}[i] = \b{w}[j]$ for all $i,j \in D(\b{w})$ such that $i \equiv j \pmod{p}$.
This notion of a period in partial words is known as a \emph{strong period}.
We only need the definition of periodicity for full words.
In the case $\b{w}$ is a full word being $p$-periodic just means that $\b{w}[i] = \b{w}[j]$ for all $i \equiv j \pmod{p}$ since $H(\b{w})$ is empty for a full word.
So, it suffices for our purposes to only consider this single definition of a period.
If $\b{v}$ and $\b{w}$ are two partial words with $|\b{v}| = |\b{w}|$ and $D(\b{v}) \subseteq D(\b{w})$ such that  $\b{v}[i] = \b{w}[i]$ for all $i \in D(\b{v})$, then we say $\b{v}$ is \emph{contained} in $\b{w}$ and write $\b{v} \subset \b{w}$.
Partial words $\b{u}$ and $\b{v}$ are said to be \emph{compatible} if there exists a third partial word $\b{w}$ such that $\b{u} \subset \b{w}$ and $\b{v} \subset \b{w}$.
In the case $\b{u}$ and $\b{v}$ are compatible we write $\b{u} \uparrow \b{v}$.

We now give the definition we are primarily concerned with.
An \emph{$r$th power} is a partial word $\b{w}$ such that $\b{w} \subset \b{x}^r$ for some full word $\b{x}$.
Often $2$nd powers are called \emph{squares} while $3$rd powers are commonly referred to as \emph{cubes}.

To review the definitions in this section consider as an example the partial word
\[\b{w} = a\diamond b\, a\,c \diamond a\,c\,b\]
where the alphabet is $\Sigma = \{a,b,c\}$.
Here $|\b{w}| = 9$ while $H(\b{w}) = \{2,6\}$.
The factor $\b{w}[1..3] = a \diamond b$ is contained in $a\,c\,b$.
In fact all the factors $\b{w}[3i-2..3i]$ for $1 \leq i \leq 3$ are all contained in $a\,c\,b$.
This makes $\b{w} \subset (a\,c\,b)^3$ a cube.

\section{Unique power position full words}
Let us now show that any full word with multiple powers starting at the same position must contain a power starting at another position.
We first recall the periodicity theorem of Fine and Wilf.

\begin{theorem}[{\cite[Theorem 1]{FineWilf}}]
Let $\b{w}$ be a full word with periods $p$ and $q$.
If $|\b{w}| \geq p + q - \gcd(p,q)$, then $\b{w}$ has period $\gcd(p,q)$.
\label{thm:FW}
\end{theorem}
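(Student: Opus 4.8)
The plan is to prove this periodicity theorem by a Euclidean-style induction on $p+q$, mirroring the subtractive Euclidean algorithm applied to the pair of periods.

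Set $d=\gcd(p,q)$ and assume without loss of generality that $p\ge q$. If $q$ divides $p$ — in particular if $p=q$ or $q=1$ — then $d=q$, which is already a period of $\b{w}$, and there is nothing to prove; this is the base case. Otherwise $p>q$ and $q$ does not divide $p$, and the crucial move is to pass, not to the assertion ``$\b{w}$ has period $p-q$'' directly, but to the suffix $\b{v}:=\b{w}[q+1..\,|\b{w}|]$. I claim $\b{v}$ has periods $p-q$ and $q$: the period $q$ is inherited from $\b{w}$, and for the period $p-q$ one verifies, for each index $i$ in the required range $1\le i\le|\b{v}|-(p-q)=|\b{w}|-p$, that $\b{v}[i]=\b{w}[q+i]=\b{w}[i]=\b{w}[i+p]=\b{v}[i+(p-q)]$ using periods $q$ and $p$ of $\b{w}$ — the decisive point being that on this range the inequality $i+p\le|\b{w}|$ holds automatically, so no boundary case arises. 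Moreover $|\b{v}|=|\b{w}|-q\ge(p+q-d)-q=p-d=(p-q)+q-\gcd(p-q,q)$, so $\b{v}$ satisfies the hypothesis of the theorem for the strictly smaller pair $(p-q,q)$; by the inductive hypothesis, $\b{v}$ has period $d$.

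It remains to lift this period $d$ back up to $\b{w}$. Since $d$ divides $p-q$ we have $p\ge q+d$, hence $|\b{w}|\ge p+q-d\ge 2q$, so for every position $i\le q$ the equality $\b{w}[i]=\b{w}[i+q]$ holds by period $q$, with $i+q$ landing inside the span of $\b{v}$ and congruent to $i$ modulo $d$ (as $d$ divides $q$). The period $d$ of $\b{v}$ says $\b{w}[i']=\b{w}[j']$ whenever $i'\equiv j'\pmod d$ and $i',j'>q$; shifting any index $\le q$ up by $q$ reduces the general case to this one, so $\b{w}[i]=\b{w}[j]$ for all $i\equiv j\pmod d$, that is, $\b{w}$ has period $d$. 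This closes the induction.

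The reductions amount to bookkeeping, and the length hypothesis is preserved verbatim at each step, so the recursion runs without friction. The one place that genuinely needs care — and where a naive attempt goes wrong — is the decision to reduce to the suffix $\b{v}$ with periods $(p-q,q)$ rather than trying to show directly that $\b{w}$ itself has period $p-q$: that statement is true, but proving it is no easier than the theorem, since for indices near $q$ no short chain through the periods $p$ and $q$ stays inside $\b{w}$. Passing to the suffix sidesteps this obstruction completely.
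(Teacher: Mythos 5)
Your proof is correct. Note that the paper does not prove Theorem~\ref{thm:FW} at all --- it is quoted from Fine and Wilf's paper and used as a black box --- so there is no internal argument to compare against; what you have written is the standard self-contained Euclidean-induction proof of the periodicity lemma. The details all check: the suffix $\b{v}=\b{w}[q+1..]$ genuinely acquires period $p-q$ because on the range $i\le|\b{w}|-p$ the chain $\b{w}[q+i]=\b{w}[i]=\b{w}[i+p]$ stays inside $\b{w}$; the length bound $|\b{v}|\ge p-d=(p-q)+q-\gcd(p-q,q)$ hands the hypothesis to the smaller pair intact; and the lift back uses $p-q\ge d$ (valid since $q\nmid p$ forces $p-q>0$ and $d\mid p-q$) to guarantee $|\b{w}|\ge 2q$, so every position $\le q$ can be pushed by $q$ into the span of $\b{v}$ without changing its letter or its residue modulo $d$. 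Your closing remark is also well taken: attempting to prove directly that $\b{w}$ has period $p-q$ leaves a window of up to $d$ positions in $(|\b{w}|-p,\,q]$ that no short chain through the periods $p$ and $q$ reaches, and passing to the suffix is precisely what dissolves that obstruction.
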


We can now prove a corollary of this theorem.
This corollary solves our problem of maximizing the number of powers subject to constraint that a unique position may start a power in the case of full words.

\begin{corollary}
If $\b{w}$ is a full word with two $r$th powers starting at position $i$, then there exists a position $j > i$ which starts an $r$th power.
\label{cor:full}
\end{corollary}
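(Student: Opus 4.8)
The plan is to start from two $r$th powers at position $i$ and exhibit an $r$th power starting at some $j>i$ (we may assume $r\ge 2$, the case $r=1$ being trivial). Distinct factors of $\b{w}$ starting at $i$ have distinct lengths, so the two powers have distinct periods; write them as $A:=\b{w}[i..i+rp-1]$, which is $p$-periodic, and $B:=\b{w}[i..i+rq-1]$, which is $q$-periodic, with $p<q$. Since $A$ is a prefix of $B$ it is also $q$-periodic. Everything left of position $i$ is irrelevant, and as $\b{w}$ contains $B$ we have $|\b{w}|\ge i+rq-1$, so any factor lying inside $B$ automatically fits inside $\b{w}$.

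I would split on how $rp$ compares with $q$. If $rp\le(r-1)q$, then $q$-periodicity of $B$ gives $\b{w}[i+q..i+q+rp-1]=A$, and since $i+q+rp-1\le i+rq-1$ this copy of $A$ lies inside $B$; it is an $r$th power of period $p$ starting at $j=i+q>i$. If instead $rp>(r-1)q$ --- so $rp>q$ and $A$ genuinely carries the two periods $p$ and $q$ --- I would apply Theorem~\ref{thm:FW} to $A$: provided $rp\ge p+q-\gcd(p,q)$, the word $A$ has period $g:=\gcd(p,q)$. One checks $g<p$, for if $p\mid q$ then $q\ge 2p$ and hence $rp\le(r-1)q$, contrary to the case; thus $g\le p/2$, the factor $\b{w}[i+g..i+g+rg-1]$ lies inside $A$, and being $g$-periodic of length $rg$ it is an $r$th power starting at $j=i+g>i$.

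The remaining case --- $rp>(r-1)q$ but $rp<p+q-\gcd(p,q)$, so that Fine--Wilf does not apply to $A$ --- is the one I expect to be the real obstacle. Here $rp<p+q$ gives $(r-1)p<q$, and combined with $rp>(r-1)q>(r-1)^2p$ this forces $r=2$ and $q/2<p<q$. Set $e:=q-p$, so $1\le e<p$. A short chain of periodicities --- starting from position $m+e$ in the prefix $Y:=\b{w}[i..i+p-1]$, shift up by $p$ (legal by $p$-periodicity of $A$) to land at position $m+q$, then down by $q$ (legal by $q$-periodicity of $B$) to land at position $m$ --- shows that $Y$ is $e$-periodic. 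Consequently $\b{w}[i+e..i+e+2(p-e)-1]=Y[e+1..p]\,Y[1..p-e]=\big(Y[1..p-e]\big)^2$, a square of period $p-e=2p-q\ge 1$ starting at $j=i+e>i$, and it lies inside $A$. The care needed is in noticing that this last configuration is forced to be ``binary-like'' with $Y$ acquiring the short period $q-p$, and in checking all the index ranges so that the exhibited factors really sit inside $B$.
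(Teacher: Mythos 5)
Your proof is correct, and while it rests on the same two pillars as the paper's argument --- the Fine--Wilf theorem plus shifting a power by a period of the longer power --- your case decomposition is genuinely different and in some ways sharper. (Note your $p<q$ convention is the reverse of the paper's, which takes $q<p$.) Your first case uses the exact condition $rp\le(r-1)q$ under which the shorter power recurs after a shift by $q$; this single case absorbs three of the paper's cases: its opening case $rq\le p$, its divisibility case $q\mid p$ (as you observe, $p\mid q$ forces $q\ge 2p$ and hence $rp\le(r-1)q$), and its final case $p<rq<p+q-\gcd(p,q)$ with $r>2$ (where the paper shifts by $p$ after noting $rq<2p$). Your Fine--Wilf case matches the paper's, though you apply it to the shorter power $A$ and place the new power at $i+\gcd(p,q)$ rather than at position $2$. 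The real divergence is the residual case: the paper splits off $r=2$ with $2q>p$ at the outset and factors $\b{x}^2=\b{u}\b{v}\b{u}\b{u}\b{v}\b{u}$ to exhibit the square $\b{u}\b{u}$ of period $p-q$ at position $q+1$; you instead deduce that the residual case forces $r=2$, show via the shift-up-by-$p$, shift-down-by-$q$ chain that the length-$p$ prefix acquires the short period $e=q-p$, and exhibit a different square, of period $2p-q$ at position $i+e$. Both constructions are valid; the paper's up-front factorization makes the rest of its analysis cleaner by restricting to $r>2$, while your version buys a more uniform and tightly delimited case split at the cost of the more delicate periodicity chase at the end. All of your index-range checks ($m\le p-e$ for the chain, $(r+1)g\le rp$ for the Fine--Wilf case, $rp\le(r-1)q$ for the shift) come out correct.
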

\begin{proof}
It suffices to consider a full word $\b{w}$ such that $\b{w}$ is an $r$th power and a proper prefix of it is also an $r$th power.
Hence $\b{w} = \b{x}^r$ for some $\b{x}$ with $|\b{x}| = p > 1$ and $\b{w}[1..rk] = \b{y}^r$ for some $\b{y}$ with $0 < |\b{y}| = q < p$.
We must show that some $r$th power starts at position $j > 1$.
If $rq \leq p$, then position $p+1$ starts an $r$th power equal to $\b{y}^r$.
So, we may assume that $rq > p$.

Furthermore, if $r=2$ and $2q > p$, then $\b{y} = \b{uv}$ where $\b{x} = \b{yu}$ and $|\b{u}| > 0$.
Thus, $\b{x}^2 = \b{uv}\b{u}^2\b{vu}$ and there is a square starting at a position $j > 1$.
Hence, we may also assume $r > 2$.

If $q$ divides $p$ and $rq > p$, then it follows that $p = r'q$ for some $0 < r' < r$.
This means $\b{x} = \b{y}^{r'}$ and so $\b{w} = \b{y}^{rr'}$.
In particular, this means $j = q+1$ starts an $r$th power equal to $\b{y}^r$.
So, we may also assume $q$ does not divide $p$.

In the case $rq \geq p + q - \gcd(p,q)$, by Theorem~\ref{thm:FW} it follows that $\b{w}[1..rq]$ is $\gcd(p,q)$-periodic.
Since $q$ does not divide $p$ we have $\gcd(p,q) < q$ and $j = 2$ starts an $r$th power.

In the only remaining case we have $p < rq < p + q - \gcd(p,q)$.
It follows that $|\b{y}^r| = rq < 2p = |\b{x}^2|$.
Thus we see that $\b{x}^2$ contains the $r$th power $\b{y}^r$.
Recalling that we are assuming $r > 2$ we conclude that position $j=p+1$ starts an $r$th power equal to $\b{y}^r$.
\end{proof}

\section{Unique square position partial words}

We now turn our attention to partial words having established that full words cannot contain multiple powers if a single position is allowed to start a power.
Let us now characterize how many squares a partial word may contain if only one position starts a square.
We first prove some results which we use to deduce our main theorem on squares.

\begin{lemma}
If $\b{w}$ is a partial word which contains more than one square but only has one position that starts a square, then $H(\b{w}) = \{1\}$.
\label{lem:H1}
\end{lemma}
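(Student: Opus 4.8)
The plan is to show a hole of $\b{w}$ can occur nowhere except position $1$, by exploiting the fact that a hole automatically forces squares at the two length-two factors straddling it. First I would dispose of the base case: by Corollary~\ref{cor:full}, no full word can contain more than one square while having a unique position starting a square, so $\b{w}$ must contain at least one hole. Hence $H(\b{w}) \neq \emptyset$, and it suffices to prove $H(\b{w}) \subseteq \{1\}$. Write $i$ for the unique position of $\b{w}$ that starts a square, and note $|\b{w}| \geq 2$ since $\b{w}$ contains a square.

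The key observation is that if $\b{w}[h] = \diamond$, then short factors around $h$ are automatically squares. Whenever $h \geq 2$, the factor $\b{w}[h-1..h]$ is contained in $c\,c$ for $c = \b{w}[h-1]$ (or for any letter $c$ if $\b{w}[h-1]$ is itself a hole), so position $h-1$ starts a square; and whenever $h \leq |\b{w}| - 1$, the factor $\b{w}[h..h+1]$ is a square in the same way, so position $h$ starts a square. Since $i$ is the \emph{only} position starting a square, the first statement forces $h - 1 = i$ and the second forces $h = i$. For a hole with $2 \leq h \leq |\b{w}| - 1$ both apply, giving $i = h - 1$ and $i = h$ simultaneously, which is absurd; so no hole lies strictly between the endpoints.

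It then remains to exclude a hole at $h = |\b{w}|$. There $h \geq 2$, so the first statement still gives $i = |\b{w}| - 1$. But every square of $\b{w}$ starts at position $i$ (as $i$ is the unique such position), so each is a factor of length at most $|\b{w}| - i + 1 = 2$, hence of length exactly $2$; thus $\b{w}$ contains at most one square, contradicting the hypothesis. Therefore every hole of $\b{w}$ lies at position $1$, and combined with $H(\b{w}) \neq \emptyset$ we conclude $H(\b{w}) = \{1\}$.

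I do not anticipate a substantive obstacle; the care needed is purely in the boundary bookkeeping — treating $h = 1$ and $h = |\b{w}|$ separately, checking the relevant length-two factors actually fit inside $\b{w}$, and confirming the ``automatic square'' step remains valid when the neighbouring position is also a hole.
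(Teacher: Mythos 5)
Your proposal is correct and follows essentially the same route as the paper: the two length-two factors straddling an interior hole start squares at distinct positions, a hole at the last position forces the unique square to have length two (contradicting the hypothesis of more than one square), and Corollary~\ref{cor:full} rules out the hole-free case. The only difference is presentational — you spell out the boundary bookkeeping slightly more explicitly than the paper does.
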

\begin{proof}
If $\b{w}[i] = \diamond$ for $1 < i < |\b{w}|$, then $\b{w}[i-1..i]$ and $\b{w}[i..i+1]$ are both squares starting at different positions.
If $\b{w}[n] = \diamond$ where $n = |\b{w}|$, then position $n-1$ starts a square.
However, position $n-1$ can only start the single square $\b{w}[n-1..n]$.
By Corollary~\ref{cor:full} no full word can have the property that it contains more than one square but only has a single position which starts a square.
Hence, $H(\b{w}) \neq \emptyset$ and it follows that $H(\b{w}) = \{1\}$.
\end{proof}

\begin{remark}
We now see that if a single position in a partial word starts a square and multiple squares start at this position, then the unique position starting the square must be the first position.
This can be helpful to keep in mind.
We typically say something like ``partial words with a unique position starting a square'' rather than ``partial words that have all squares starting on the first position'' since the former is more general and implies the latter when multiple squares are present.
\end{remark}

\begin{figure}
\begin{tikzpicture}
\draw (-4,2)--(4,2);
\draw (-4,1.5)--(4,1.5);
\draw (-4,2)--(-4,1.5);
\draw (0,2)--(0,1.5);
\draw (4,2)--(4,1.5);

\draw (-4,1.5)--(-4,1);
\draw (-1.5,1.5)--(-1.5,1);
\draw (1,1.5)--(1,1);
\draw (-4,1)--(1,1);
\draw[dotted] (-3,1.5)--(-3,1);
\draw[dotted] (-2.5,1.5)--(-2.5,1);
\draw[dotted] (-0.5,1.5)--(-0.5,1);
\draw[dotted] (0,1.5)--(0,1);

\draw (0,1)--(0,0.5);
\draw (2.5,1)--(2.5,0.5);
\draw (0,1)--(2.5,1);
\draw (0,0.5)--(2.5,0.5);
\draw[dotted] (1,1)--(1,0.5);
\draw[dotted] (1.5,1)--(1.5,0.5);

\node at (-2, 1.75) {$\b{u}$};
\node at (2, 1.75) {$\b{v}$};
\node at (-3.5,1.25) {$\b{x}$};
\node at (-2.75,1.25) {$\b{y}$};
\node at (-2,1.25) {$\b{x'}$};
\node at (-1,1.25) {$\b{x''}$};
\node at (-.25,1.25) {$\b{y}$};
\node at (0.5,1.25) {$\b{x'}$};
\node at (0.5,0.75) {$\b{x'}$};
\node at (1.25,0.75) {$\b{y}$};
\node at (1.75,0.75) {$\b{x'}$};

\draw (-4,0)--(4,0);
\draw (-4,0)--(-4,-0.5);
\draw (0,0)--(0,-0.5);
\draw (4,0)--(4,-0.5);
\draw (-4,-0.5)--(4,-0.5);
\draw (-4,-0.5)--(-4,-1);
\draw (-0.5,-0.5)--(-0.5,-1);
\draw (3,-0.5)--(3,-1);
\draw (-4,-1)--(3,-1);
\draw[dotted] (-3.5,-0.5)--(-3.5,-1);
\draw[dotted] (-3,-0.5)--(-3,-1);
\draw[dotted] (0,-0.5)--(0,-1);
\draw[dotted] (0.5,-0.5)--(0.5,-1);
\draw[dotted] (1,-0.5)--(1,-1);

\node at (-2,-0.25) {$\b{u}$};
\node at (2,-0.25) {$\b{v}$};
\node at (-3.75,-0.75) {$\b{x}$};
\node at (-3.25,-0.75) {$\b{x'}$};
\node at (-1.5,-0.75) {$\b{y}$};
\node at (-0.25,-0.75) {$\b{x''}$};
\node at (0.25,-0.75) {$\b{x'}$};
\node at (0.75,-0.75) {$\b{x'}$};
\end{tikzpicture}
\caption{Partial words in proof of Lemma~\ref{lem:2k}.}
\label{fig:1}
\end{figure}
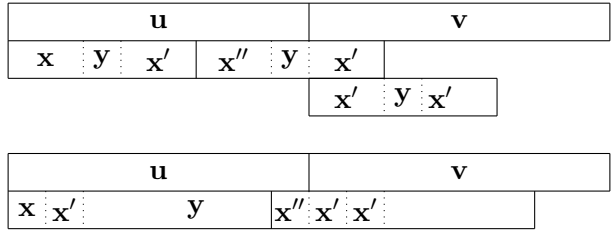

\begin{lemma}
Let $\b{u} = \diamond \b{u'}$ for a full word $\b{u'}$.
Let $\b{v}$ be any full word compatible with $\b{u}$, and set $\b{w} = \b{uv}$.
If $\b{w}[1..2k]$ is a square and $|\b{u}| < 2k < |\b{w}|$, then there exists a position $1 < i < |\b{w}|$ which starts a square in $\b{w}$.
\label{lem:2k}
\end{lemma}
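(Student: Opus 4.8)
The plan is to exploit that $\b{w}$ has exactly one hole, located at position $1$: away from that position $\b{w}$ is completely determined by $\b{v}$, so the hypothesis that $\b{w}[1..2k]$ is a square translates into a short list of equalities among the letters of $\b{v}$, from which I will read off an explicit square that straddles the boundary between the two ``copies'' of $\b{v}$ sitting inside $\b{w}$. To fix notation, note first that compatibility of $\b{v}$ with $\b{u}$ forces $|\b{v}| = |\b{u}|$, so writing $n = |\b{u}|$ we have $|\b{w}| = 2n$ and the hypothesis $|\b{u}| < 2k < |\b{w}|$ becomes $n < 2k < 2n$. Put $d = n - k$; then $2k > n$ gives $d \ge 1$ and $2d + 1 \le n$ (equivalently $d + 1 \le k$), while $2k < 2n$ gives $d \le n - 1$. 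Since $\b{u} = \diamond \b{u}'$ with $\b{u}'$ full and $\b{u} \uparrow \b{v}$ with $\b{v}$ full, necessarily $\b{u}[i] = \b{v}[i]$ for $2 \le i \le n$; hence $H(\b{w}) = \{1\}$ and
\[ \b{w}[i] = \b{v}[i] \quad (2 \le i \le n), \qquad \b{w}[n+i] = \b{v}[i] \quad (1 \le i \le n). \]

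Next I would turn the square hypothesis into identities in $\b{v}$. Since $\b{w}[1..2k] \subset \b{x}^2$ for a full word $\b{x}$ with $|\b{x}| = k$, and the only hole of $\b{w}$ is at position $1$, comparing $\b{w}[1..2k]$ with $\b{x}\b{x}$ entry by entry gives $\b{w}[i] = \b{w}[k+i]$ for all $2 \le i \le k$. Evaluating these equalities in the two regimes $2 \le i \le d$ (where $k+i \le n$) and $i = d+1$ (where $k+i = n+1$), using the formulas above, yields
\[ \b{v}[i] = \b{v}[n-d+i] \quad (2 \le i \le d), \qquad \b{v}[d+1] = \b{v}[1]. \]

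Finally I would exhibit the square. Combining these identities with the description of $\b{w}$, the factor $\b{w}[k+2 .. k+2d+1]$ --- which is hole-free, since $k+2 > 1$ --- works out to
\[ \b{v}[2]\b{v}[3] \cdots \b{v}[d]\, \b{v}[1]\, \b{v}[2]\b{v}[3] \cdots \b{v}[d]\, \b{v}[1] = \bigl( \b{v}[2]\b{v}[3] \cdots \b{v}[d]\, \b{v}[1] \bigr)^{2}, \]
which is a square (with the repeated block understood as the single letter $\b{v}[1]$ when $d = 1$). It starts at position $i = k+2$, and since $1 \le d \le n-1$ we have $1 < k+2 < k + 2d + 1 = n + d + 1 \le 2n = |\b{w}|$; hence $i = k+2$ starts a square of $\b{w}$ at a position strictly between $1$ and $|\b{w}|$, as required.

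The step I expect to be genuinely delicate is choosing the right factor. The naive candidates --- position $2$, position $n+1$, or position $k+1$ --- each yield a square only under an extra hypothesis on $\b{v}$, whereas the window $\b{w}[k+2 .. k+2d+1]$ spans the seam between the two copies of $\b{v}$ and is placed precisely so as to avoid the one letter $\b{v}[n-d+1]$ that the square hypothesis leaves unconstrained. Once the right window is identified, the remainder is careful index bookkeeping near the boundary position $n$ of $\b{w}$, together with the degenerate case $d = 1$.
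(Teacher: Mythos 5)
Your proof is correct. I checked the index bookkeeping: with $n=|\b{u}|=|\b{v}|$ and $d=n-k$, the square hypothesis together with $H(\b{w})=\{1\}$ does give $\b{w}[i]=\b{w}[k+i]$ for $2\le i\le k$, which yields exactly the identities $\b{v}[i]=\b{v}[n-d+i]$ for $2\le i\le d$ and $\b{v}[d+1]=\b{v}[1]$ (the latter needs $d+1\le k$, which follows from $n<2k$), and these suffice to make $\b{w}[k+2..k+2d+1]$ equal to $(\b{v}[2]\cdots\b{v}[d]\,\b{v}[1])^2$, with the stated bounds placing it strictly inside $\b{w}$. The underlying idea is the same as the paper's --- both proofs locate a square of length $2(|\b{u}|-k)$ in the region where the period $k$ of the square interacts with the seam between $\b{u}$ and $\b{v}$ --- but your execution is genuinely different and, I would say, tighter. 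The paper splits into the cases $3k/2\le|\b{u}|$ and $|\b{u}|\le 3k/2$ and reads the square $\b{y}\b{x'}\b{y}\b{x'}$ or $\b{y}\b{y}$ off a pictorial factorization, with the two cases producing squares in different positions; you give one uniform window that works in all cases, and you correctly isolate the single letter $\b{v}[n-d+1]=\b{w}[k+1]$ that the hole at position $1$ leaves unconstrained, which is precisely why the naive windows starting at $k+1$ or $n+1$ can fail and why your window must start at $k+2$. What the paper's version buys is a picture that matches its Figure~1; what yours buys is the elimination of the case analysis and a fully verifiable chain of equalities, including the degenerate case $d=1$.
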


\begin{proof}
A diagram depicting the partial words in this proof can be found in Figure~\ref{fig:1}.
Assume that $\b{w}[1..2k]$ is a square and $|\b{u}| < 2k < |\b{w}|$.
If $3k/2 \leq |\b{u}| < 2k$, then we must have
\[\b{w}[1..(|\b{u}| + k)] = \b{xyx'x''yx'yx'}\]
where $\b{x'}$ and $\b{x''}$ are nonempty full words compatible with the partial word $\b{x}$ where $H(\b{x}) = \{1\}$.
Here $|\b{x}| = |\b{x'}| = |\b{x''}| = 2k - |\b{u}|$ while $|\b{y}| = k - 2|\b{x}|$.
Under the assumption $3k/2 \leq |\b{u}| < 2k$ it follows that $0 < |\b{x}| \leq k/2$.
In particular, $\b{x}$, $\b{x'}$, and $\b{x''}$ are all nonempty.
Thus, $\b{w}$ contains the square $\b{yx'yx'}$ starting at a position greater than $1$.

If $|\b{u}| \leq 3k/2$, then 
\[\b{w}[1..(3|\b{u}| - 2k)] = \b{xx'yx''x'x'}\]
where $\b{x'}$ and $\b{x''}$ are nonempty full words compatible with the partial word $\b{x}$ where $H(\b{x}) = \{1\}$.
Here $|\b{x}| = |\b{x'}| = |\b{x''}| = |\b{u}| - k$ while $|\b{y}| = k - 2|\b{x}|$.
Since $2k < |\b{w}|$ we have $k < |\b{u}|$.
Under the assumption $|\b{u}| \leq 3k/2$ we find that $0 < |\b{x}| \leq k/2$.
Thus,  $\b{x}$, $\b{x'}$, and $\b{x''}$ are all nonempty.
In this case $\b{w}$ contains the square $\b{x'x'}$ starting at a position greater than $1$ and the proof is complete.
\end{proof}

\begin{lemma}
Let $\b{u} = \diamond \b{u'}$ for a full word $\b{u'}$.
Let $\b{v}$ be any full word compatible with $\b{u}$, and set $\b{w} = \b{uv}$.
If $\b{w}[1..2k]$ is a square with $2k \leq |\b{u}|$ such that $\b{w}[k+1] = \b{v}[1]$, then $\b{v}$ contains a square.
Hence, $\b{w}$ contains a square starting at position $1 < i < |\b{w}|$.
\label{lem:short}
\end{lemma}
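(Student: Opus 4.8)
The plan is to show directly that the length-$2k$ prefix of $\b{v}$ is itself a square. First I would fix notation. Since $\b{v}$ is compatible with $\b{u}$, the two partial words have equal length $n := |\b{u}| = |\b{v}|$, and since the unique hole of $\b{u} = \diamond \b{u'}$ lies in position $1$, compatibility forces $\b{v}[j] = \b{u'}[j-1]$ for every $2 \le j \le n$, while $\b{v}[1]$ is a priori unconstrained by compatibility. Writing $a_j := \b{u'}[j]$, this says $\b{w}[j] = a_{j-1}$ for $2 \le j \le n$ and $\b{w}[n+j] = \b{v}[j]$ for $1 \le j \le n$, with $H(\b{w}) = \{1\}$. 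I will also assume $k \ge 1$, the case $k=0$ being degenerate.

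Next I would extract the periodicity that ``$\b{w}[1..2k]$ is a square'' supplies. Because $2k \le |\b{u}| = n$, the factor $\b{w}[1..2k]$ has its only hole in position $1$; writing $\b{w}[1..2k] \subset \b{x}^2$ with $|\b{x}| = k$ and comparing positions $i$ and $i+k$ for $2 \le i \le k$ gives $\b{w}[i] = \b{w}[i+k]$, i.e. $a_{i-1} = a_{i-1+k}$, which reindexes to the key relation $a_j = a_{j+k}$ for $1 \le j \le k-1$. (Position $1$ contributes no constraint, as $\b{w}[1] = \diamond$.) Moreover $\b{w}[k+1] = a_k$ directly from the structure of $\b{w}$ (note $2 \le k+1 \le n$, using $2k \le n$), so the hypothesis $\b{w}[k+1] = \b{v}[1]$ pins down the one free letter of $\b{v}$: namely $\b{v}[1] = a_k$.

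Then the conclusion falls out by a short comparison of halves. One has $\b{v}[1..2k] = a_k\, a_1\, a_2 \cdots a_{2k-1}$, which uses only positions $1$ through $2k \le n$ of $\b{v}$; its first and second halves coincide, since position $1$ of each is $a_k$ and, for $2 \le j \le k$, the $j$-th letter of the first half is $a_{j-1}$ while that of the second half is $a_{k+j-1}$, and these agree by the relation above applied with index $j-1 \in \{1,\dots,k-1\}$. Hence $\b{v}$ contains the square $\b{v}[1..2k]$. Transported back to $\b{w}$ this square occupies positions $|\b{u}|+1,\dots,|\b{u}|+2k$, so $i = |\b{u}|+1$ starts a square of $\b{w}$ with $1 < |\b{u}|+1 < 2|\b{u}| = |\b{w}|$, since $|\b{u}| = n \ge 2k \ge 2$. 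I do not expect a genuine obstacle here; the only points needing care are the index bookkeeping — each reference $\b{w}[k+1]$, $\b{v}[2k]$, etc. must land inside $\b{u}$ resp.\ $\b{v}$, which is precisely where the hypothesis $2k \le |\b{u}|$ is used — and the remark that compatibility determines all letters of $\b{v}$ except the first.
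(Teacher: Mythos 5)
Your proof is correct and follows essentially the same route as the paper: both arguments use the periodicity $\b{w}[i]=\b{w}[i+k]$ for $2\le i\le k$ coming from the square $\b{w}[1..2k]$, combine it with the hypothesis $\b{v}[1]=\b{w}[k+1]$ to identify $\b{v}[1..k]$ with $\b{w}[k+1..2k]$, and conclude that $\b{v}[1..2k]$ is a square. Your version just makes the index bookkeeping (and the final positional claim $1<|\b{u}|+1<|\b{w}|$) more explicit than the paper does.
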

\begin{proof}
Let $\b{x} = \b{w}[1..k]$ and $\b{y} = \b{w}[k+1..2k]$.
Then $H(\b{x}) = \{1\}$, $\b{y}$ is a full word, and $\b{x}[2..k]=\b{y}[2..k]$.
If $\b{v}[1]=\b{w}[k+1]=\b{y}[1]$, then $\b{v}[1..k] = \b{y}$.
Therefore, $\b{v}[1..2k] = \b{y}^2$ and $\b{v}$ contains a square.
\end{proof}

\begin{theorem}
Any partial word over an alphabet of size $k$ with a unique position starting a square contains at most $k$ squares.
Furthermore, for any $k$ there exists a partial word over an alphabet of size $k$ with a unique position starting a square containing exactly $k$ squares.
\label{thm:sq}
\end{theorem}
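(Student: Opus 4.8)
The plan is to prove the two halves of the theorem separately. For the upper bound, I would argue by contradiction: suppose $\b{w}$ is a partial word with a unique position starting a square, but $\b{w}$ contains $k+1$ or more squares. By Lemma~\ref{lem:H1}, either $\b{w}$ has at most one square (contradicting the hypothesis) or $H(\b{w}) = \{1\}$, so we may assume $\b{w} = \diamond \b{w'}$ with $\b{w'}$ a full word and every square of $\b{w}$ starts at position $1$. Write $\b{w} = \b{u}\b{v}$ where $\b{u} = \diamond \b{u'}$ is chosen so that $\b{v}$ is the full word starting just after the shortest square (or otherwise split at a convenient point); the squares of $\b{w}$ are then partial words $\b{w}[1..2k_1] \subset \b{x}_1^2, \dots, \b{w}[1..2k_m] \subset \b{x}_m^2$ with $k_1 < k_2 < \cdots < k_m$. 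Lemmas~\ref{lem:2k} and~\ref{lem:short} are designed precisely to forbid a second square-starting position whenever $2k_j$ falls in the ``overlapping'' range $|\b{u}| < 2k_j < |\b{w}|$ or in the short range $2k_j \le |\b{u}|$ with $\b{w}[k_j+1] = \b{v}[1]$; so the key counting step is to show that the positions $k_j+1$ (which would start $\b{x}_j^2$ again if $\b{w}$ were long enough, by Corollary~\ref{cor:full}-style reasoning) are all distinct and must all be ``blocked'' by the hole, which forces $\b{w}[k_j+1] = \diamond$ for all but one $j$ — impossible since there is only one hole, unless the letters involved coincide. The bound $k$ should then emerge from the fact that the letters $\b{w}[k_1+1], \dots, \b{w}[k_m+1]$ (all lying outside the single hole position, since each $k_j \ge 1$) must be pairwise distinct, hence $m \le k$.

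For the lower bound, I would exhibit an explicit family of partial words, one for each $k$, over $\Sigma = \{a_1, \dots, a_k\}$. The natural candidate is something like $\b{w} = \diamond\, a_1 a_2 \cdots a_{k-1}\, a_1\, a_1 a_2 \cdots$, built so that for each $i$ with $1 \le i \le k$, the prefix $\b{w}[1..2\ell_i]$ is compatible with $(\b{x}_i)^2$ where $\b{x}_i$ differs from a fixed word only in its first position, which is covered by the hole. Concretely, one wants $k$ nested square-prefixes whose half-lengths $\ell_1 < \cdots < \ell_k$ are arranged so that no interior position starts any square: the hole at position $1$ is what lets all $k$ squares coexist at position $1$, and outside position $1$ the word must be carefully chosen (e.g. essentially aperiodic, or square-free after deleting the hole) so that no square begins anywhere else. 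I would write down the word, list its $k$ squares explicitly with their half-periods, and then verify the uniqueness of the square-starting position by a direct check that no factor $\b{w}[i..i + 2\ell - 1]$ with $i > 1$ is a square.

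The main obstacle I expect is the upper-bound counting argument: making precise the claim that distinct square-prefixes $\b{w}[1..2k_j]$ force distinct ``reoccurrence positions'' $k_j + 1$ whose required letters are pairwise incompatible unless they hide under the hole. One has to rule out the degenerate cases where two square half-periods are related by divisibility (as in the full-word proof of Corollary~\ref{cor:full}, where $q \mid p$ produces an interior power), and where $2k_j$ is so small that Lemma~\ref{lem:short}'s hypothesis $\b{w}[k_j+1] = \b{v}[1]$ fails — in that latter case I would need a separate sub-argument showing $\b{w}[k_j+1] \ne \b{w}[k_i+1]$ directly from the structure of the two compatible squares. Handling the interaction between the three lemmas cleanly, so that every square-prefix of $\b{w}$ contributes one fresh forbidden letter value at a position other than $1$, is where the real work lies; the lower-bound construction, by contrast, should be routine once the right word is guessed.
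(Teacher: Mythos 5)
Your overall strategy matches the paper's — localize the hole to position $1$ via Lemma~\ref{lem:H1}, use Lemmas~\ref{lem:2k} and~\ref{lem:short} to force the letters $\b{w}[\ell_1+1],\dots,\b{w}[\ell_m+1]$ to be pairwise distinct, and exhibit an explicit extremal word — but the two places you flag as "where the real work lies" are exactly the places your argument is not yet a proof. The missing idea in the upper bound is how the two lemmas chain together, and it hinges on using a \emph{different} decomposition $\b{w}=\b{uv}$ for each pair of squares rather than one fixed split. Concretely: Lemma~\ref{lem:2k}, applied to the longest square $\b{w}[1..2\ell_m]$ split at its midpoint, forces every shorter square to satisfy $2\ell_i \leq \ell_{i+1}$ (otherwise a square would start at a position $>1$). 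This gap condition is precisely what makes Lemma~\ref{lem:short} applicable to every pair $i<j$: take $\b{u}=\b{w}[1..\ell_j]$ and $\b{v}=\b{w}[\ell_j+1..2\ell_j]$, so that $2\ell_i \leq \ell_j = |\b{u}|$; then Lemma~\ref{lem:short} says that \emph{if} $\b{w}[\ell_i+1]=\b{v}[1]=\b{w}[\ell_j+1]$ a square starts at a position $>1$, so the hypothesis "$\b{w}[k+1]=\b{v}[1]$ fails" is not a degenerate case needing a separate sub-argument — it is the desired conclusion $\b{w}[\ell_i+1]\neq\b{w}[\ell_j+1]$. Your "blocked by the hole" reasoning is also a red herring: since $H(\b{w})=\{1\}$ and every $\ell_j\geq 1$, the positions $\ell_j+1\geq 2$ are genuine letters, and the count $m\leq k$ comes purely from their pairwise distinctness.

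The lower bound is likewise not yet established: you describe the desiderata (nested square prefixes, no interior square) but the candidate you sketch, $\diamond\, a_1 a_2 \cdots a_{k-1}\, a_1\, a_1 a_2 \cdots$, already fails for $k=3$ (the factor $a_1 a_1$ is a square starting at an interior position). The construction that works is the recursive doubling $\b{w}_0=\diamond$, $\b{w}_{i+1}=\b{w}_i\, a_{i+1}\, \b{w}_i[2..]$, of length $2^k$; its key feature is that the letter $a_{i+1}$ occurs exactly once in $\b{w}_{i+1}$, so any factor not starting at position $1$ that reaches past the midpoint contains $a_{i+1}$ exactly once and cannot be a square, while factors confined to the first half are handled by induction. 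Without that specific structure (or an equivalent one), the "direct check that no factor with $i>1$ is a square" you promise has nothing to check against.
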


\begin{proof}
Consider a partial work over an alphabet $\Sigma$ of size $k$ with a unique position starting a square.
Let $\b{w} = \b{w}[1..2n]$ be a longest square.
By Lemma~\ref{lem:H1} we must have $H(\b{w}) = \{1\}$.
Assume the lengths of the squares starting at position $1$ in $\b{w}$ are $2\ell_1 < 2\ell_2 < \cdots < 2\ell_{m} = 2n$.
Then by Lemma~\ref{lem:2k} it follows that $2\ell_i \leq \ell_{i+1}$.
By Lemma~\ref{lem:short} it follows that $\b{w}[\ell_i+1] \neq \b{w}[\ell_j +1]$ for any $i < j$.
In particular, $\b{w}[\ell_1 + 1], \b{w}[\ell_2 + 1], \dots, \b{w}[\ell_m +1]$ must all be distinct letters of $\Sigma$.
However, since $|\Sigma| = k$  it must be that $m \leq k$.
Therefore such a partial word $\b{w}$ can contain at most $k$ squares.

We now construct a partial word meeting the bound we have just proven.
Given an alphabet $\Sigma = \{a_1, a_2, \dots, a_k\}$ start with $\b{w}_0 = \diamond$.
Then put $\b{w}_{i+1} = \b{w}_{i} a_{i+1} \b{w}_{i}[2..]$ for $1 \leq i \leq k$.
By construction we see that $|\b{w}_i| = 2^i$ for each $1 \leq i \leq k$ and that $\b{w}_k[1..2^j]$ is a square for each $1 \leq j \leq k$.
It only remains to show that $\b{w}_k[i..j]$ is not a square for any $i \neq 1$.

We show by induction that $\b{w}_i$ does not have any squares which do not start at position $1$.
The base case for $w_0$ is clear.
Now assume the claim for $\b{w}_i$ and consider $\b{w}_{i+1}$.
Take the factor $\b{w}_{i+1}[j_1 .. j_2]$ for some $j_1 \neq 1$.
If $j_2 < 2^i$, then this is a factor of $\b{w}_i$ and thus not a square by induction.
If $j_2 \geq 2^i$, then the factor is a full word containing exactly one occurrence of the letter $a_{i+1}$ and hence not a square. 
Therefore $\b{w}_k$ is a partial word containing $k$ squares all of which start at position $1$.
\end{proof}

Examples of partial words constructed in the proof of Theorem~\ref{thm:sq} are
\begin{align*}
\b{w}_0 &= \diamond \\
\b{w}_1 &=\diamond \, a\\
\b{w}_2 &= \diamond \, a\,b\,a \\
\b{w}_3 &= \diamond \, a\,b\,a\,c\, a\,b\,a\\
\b{w}_4 &= \diamond \, a\,b\,a\,c\, a\,b\,a\,d \, a\,b\,a\,c\, a\,b\,a
\end{align*}
with the alphabet $\Sigma = \{a,b,c,d\}$.

\section{Conclusion and future work}

In this section we begin the investigation of partial words with a unique position starting an $r$th power for $r > 2$.
Unlike for the $r=2$ case we do not completely solve the problem for $r > 2$.
We first show that we can always find a binary partial word with two $r$th powers starting at position $1$ such that position $1$ is the only position which starts an $r$th power.

\begin{proposition}
If $r \geq 2$, then the partial word
\[w = \diamond^{r-1} a b a \diamond^{r-2}\]
contains exactly two $r$th powers both of which start at position $1$.
\label{prop:2}
\end{proposition}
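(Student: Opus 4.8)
The plan is to use the fact that $|w| = (r-1) + 3 + (r-2) = 2r$ to reduce the claim to a small finite check. Every $r$th power is a factor of length $rp$ for some integer $p \geq 1$, and every factor of $w$ has length at most $2r$, so only $p = 1$ and $p = 2$ are possible. Thus it suffices to examine the factors of $w$ of length $r$, together with the single factor of length $2r$, which is $w$ itself.

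For the case $p = 2$, I would verify directly that $w$ is an $r$th power. Its only non-hole positions are $r, r+1, r+2$, carrying $a, b, a$; positions $r$ and $r+2$ have the same parity and the same letter, while $r+1$ has the opposite parity. Hence the period-$2$ full word $u$ defined by $u[i] = a$ when $i \equiv r \pmod 2$ and $u[i] = b$ otherwise contains $w$, and writing $\b{x} = u[1..2]$ (so $\b{x} = ab$ if $r$ is odd and $\b{x} = ba$ if $r$ is even) gives $w \subset \b{x}^r$. Therefore $w$ is an $r$th power; it starts at position $1$, and being the unique factor of length $2r$ it is the only $r$th power with $p = 2$.

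For $p = 1$, a factor $w[i..i+r-1]$ with $1 \leq i \leq r+1$ is an $r$th power exactly when all of its non-hole positions carry a common letter. Since $w[r+1] = b \neq a = w[r] = w[r+2]$ and a factor occupies a contiguous block of positions, such a block cannot contain $r+1$ together with $r$ or with $r+2$. But if the block contains $r+1$ then $2 \leq i \leq r+1$, and one checks it then also contains $r$ (when $i \leq r$) or $r+2$ (when $i = r+1$, using $r \geq 2$). So an $r$th power with $p = 1$ must avoid position $r+1$ entirely, and the only length-$r$ sub-block of $[1, 2r]$ doing so is $[1, r]$. Finally $w[1..r] = \diamond^{r-1} a \subset a^r$ is an $r$th power, hence the unique one with $p = 1$.

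Combining the two cases, $w$ contains precisely the two $r$th powers $w[1..r]$ and $w[1..2r]$, both starting at position $1$ and distinct since their lengths $r$ and $2r$ differ. I expect the only mildly delicate point to be the interval bookkeeping in the $p = 1$ case — in particular the implication ``$r+1$ lies in the block $\Rightarrow$ so does $r$ or $r+2$'' at the boundary value $r = 2$, where $\diamond^{r-2}$ is the empty word; everything else is a routine verification.
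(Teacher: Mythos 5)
Your proof is correct and follows essentially the same route as the paper: verify that $\b{w}[1..r] \subset a^r$ and that $\b{w} = \b{w}[1..2r]$ is contained in $(ab)^r$ or $(ba)^r$ according to the parity of $r$, then rule out every other length-$r$ factor because it must contain both $a$ and $b$, with no room left for any other power of length $\geq 2r$. Your interval bookkeeping in the $p=1$ case is just a more explicit version of the paper's one-line observation.
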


\begin{proof}
We first see that $\b{w}[1..r]$ is an $r$th power contained in $a^r$.
Also, $\b{w}[1..2r]$ is an $r$th power contained in $(ba)^r$ if $r$ is even or otherwise contained in $(ab)^r$ if $r$ is odd.
For any $i > 1$, $\b{w}[i..i+r]$ cannot be an $r$th power since it always contains both the letters $a$ and $b$.
The proposition is now proven since $\b{w}$ is too short to contain any other $r$th powers of length $2r$ or greater.
\end{proof}

For $r>2$ the situation is more complicated than for squares.
Partial words with a unique position starting an $r$th power can contain more than a single hole when $r>2$.
Also in contrast to the case of squares, there are partial words over an alphabet of size $k$ containing more than $k$ cubes.
Over $\Sigma = \{a,b\}$ the partial words
\begin{align*}
\b{v} &= \diamond\diamond a\,b\,a \diamond b\,a\,a\\
\b{w} &= \diamond \diamond a\,b\,a \diamond b\,a\, \diamond
\end{align*}
both contain three cubes all of which start at the first position.
Let us now give a generalization of the construction of $\b{w} = \diamond \diamond a\,b\,a \diamond b\,a\ a$.

\begin{proposition}
Assume that $r$ is odd and $r \equiv 0 \pmod{3}$. Then the partial word
\[w = \diamond^{r-1}aba\diamond^{r-2} b a^2 \diamond^{r-3}\]
contains exactly three $r$th powers all of which start at position $1$.
\label{prop:3}
\end{proposition}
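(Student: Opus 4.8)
The plan is to begin with the length count $|w| = (r-1) + 3 + (r-2) + 3 + (r-3) = 3r$, so that the non-hole positions of $w$ are exactly $r, r+1, r+2$, carrying $a, b, a$, together with $2r+1, 2r+2, 2r+3$, carrying $b, a, a$, the two blocks being separated by $r-2$ holes. Notice that $w[1..2r]$ is precisely the partial word $\diamond^{r-1}aba\diamond^{r-2}$ of Proposition~\ref{prop:2}, so that proposition immediately supplies two $r$th powers starting at position $1$, namely $w[1..r] \subset a^r$ and $w[1..2r]$. The third power is the whole word $w = w[1..3r] \subset (baa)^r$; this is where the hypothesis $3 \mid r$ enters: one checks that the non-hole positions $r, r+1, r+2, 2r+1, 2r+2, 2r+3$ are congruent modulo $3$, in this order, to $0, 1, 2, 1, 2, 0$, and that the letters $a, b, a$ which $baa$ attaches to the residue classes $0, 1, 2$ agree with the letters of $w$ at those six positions.

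Next I would record that a factor $w[1..m]$ which is an $r$th power must have $r \mid m$ and $m \leq |w| = 3r$, so that $m \in \{r, 2r, 3r\}$; hence the three powers just exhibited are the only $r$th powers of $w$ starting at position $1$. It remains to rule out $r$th powers starting at a position $i > 1$. Such a power occupies positions $i..i+rp-1$ with $w[i..i+rp-1] \subset \b{x}^r$ for some full word $\b{x}$ with $|\b{x}| = p$; from $i \geq 2$ and $i + rp - 1 \leq 3r$ we get $p \leq 2$, since $p = 3$ would force $i \leq 1$. The elementary fact driving both cases is that a factor of $w$ of length $rp$ is not an $r$th power if it contains two non-hole positions that are congruent modulo $p$ but carry different letters.

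For $p = 1$: by Proposition~\ref{prop:2} there is no $r$th power inside $w[1..2r]$ at a position greater than $1$, so it is enough to examine the windows $w[i..i+r-1]$ with $r+2 \leq i \leq 2r+1$, and each of these contains position $2r+1$ (carrying $b$) together with one of the positions $r+2, 2r+2$ (carrying $a$); hence none is an $r$th power. For $p = 2$: a window $w[i..i+2r-1]$ with $2 \leq i \leq r+1$ contains the positions $r, r+2, 2r+1$ when $i \leq r$, and the positions $r+2, 2r+1$ when $i = r+1$; since $r$ is odd we have $r \equiv r+2 \equiv 2r+1 \pmod{2}$, so in either case two positions of the same parity carry the distinct letters $a$ and $b$, and again no such window is an $r$th power. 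Combining the case analysis with the three powers found above yields the proposition.

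The interval bookkeeping for the case $p = 1$ is routine and the parity computation for $p = 2$ is short, but the case $p = 2$ is the one carrying real weight, because it is exactly there that the hypothesis ``$r$ odd'' is indispensable: for even $r$ one checks directly that $w[2..2r+1] \subset (ab)^r$, so without that hypothesis the proposition genuinely fails, just as $3 \mid r$ is needed for $w[1..3r]$ itself to be an $r$th power.
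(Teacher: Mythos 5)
Your proof is correct and follows essentially the same route as the paper: exhibit the three powers at position $1$ by checking residues, then eliminate every other window by finding two non-hole positions in the same residue class modulo the putative period $p\in\{1,2\}$ that carry different letters. The only cosmetic differences are that you reuse Proposition~\ref{prop:2} to dispatch the length-$r$ windows lying inside $\b{w}[1..2r]$, and that your parity clash for length-$2r$ windows plays the $a$ at position $r+2$ against the $b$ at position $2r+1$, whereas the paper uses the two $b$'s at positions $r+1$ and $2r+1$ (which have opposite parity since $r$ is odd) together with the presence of an $a$.
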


\begin{proof}
We first note that $\b{w}[1..r]$ is an $r$th power contained in $a^r$.
Next we see that $\b{w}[1..2r]$ is an $r$th power $(ab)^r$ since $r$ is odd.
The third $r$th power starting at position $1$ is $\b{w}=\b{w}[1..3r]$ which is contained in $(ba^2)^r$ where we have used that $r \equiv 0 \pmod{3}$.

Now $\b{w}$ can contain no other $r$th power of length $r$ since $\b{w}[i..i+r-1]$ always contains both the letters $a$ and $b$ for $i > 1$.
Any other $r$th power of length $2r$ must be $\b{v}=\b{w}[i..i+2r-1]$ where $i > 1$.
So, $b = \b{w}[r+1] = \b{v}[r+2-i]$ and $b = \b{w}[2r+1] = \b{v}[2r+2-i]$.
Since $r$ is odd we see that $b$ occurs in positions with opposite parity in $\b{v}$.
It follows that $\b{v}$ cannot be an $r$th power since $\b{v}$ also contains an $a$.
Therefore the proof is complete since $\b{w}$ is too short to contain any other $r$th powers of length $3r$ or greater.
\end{proof}

We let $M(r,k)$ denote the maximum possible number of $r$th powers contained in any partial word over an alphabet of size $k$ with a unique position starting an $r$th power.
In Theorem~\ref{thm:sq} we have shown that $M(2,k) = k$.
It is clear that $M(r,k) \leq M(r, k+1)$.
Proposition~\ref{prop:2} shows that $M(r,2) \geq 2$ while Proposition~\ref{prop:3} shows that $M(3(2s+1),2) \geq 3$.
We pose the following problem.

\begin{problem}
Compute $M(r,k)$ for $r > 2$ and $k > 1$.
\end{problem}

This problem appears more difficult for higher powers than it is for squares.
For higher powers multiple holes can be present, but the partial word can still contain a unique position which starts a power.
One can further generalize the problem and look for the maximum possible number of $r$th powers contained in a word over a $k$ letter alphabet where at most $t$ positions are allowed to begin $r$th powers.
The problem we considered in this article is recovered when $t = 1$.

\bibliographystyle{alpha}
\bibliography{refs}

\end{document}